\RequirePackage{tikz}
\documentclass[default,iicol]{sn-jnl}

\usepackage{amsfonts}
\usepackage{amsmath}
\usepackage{amssymb}
\usepackage{mathtools}
\usepackage{xcolor}

\jyear{2025}

\theoremstyle{thmstyleone}
\newtheorem{theorem}{Theorem}
\newtheorem{proposition}[theorem]{Proposition}

\theoremstyle{thmstyletwo}

\theoremstyle{thmstylethree}
\newtheorem{definition}{Definition}
\newtheorem{lemma}{Lemma}

\DeclareMathOperator*{\argmin}{arg\ min}
\newcommand{\del}{\, \mathrm{d}}

\newcommand{\kdis}{\mathtt{K}}
\newcommand{\odis}{\mathtt{f}}
\newcommand{\ndat}{g^{\,\delta}}
\newcommand{\ndatdis}{\texttt{g}^{\,\delta}}

\global\long\def\Leins{L^{1}\left(\Omega,\mathbb{R}^+_0\right)}
\newcommand{\norm}[2][\infty]{ \left\Vert #2 \right\Vert_{#1} }
\global\long\def\d{\,\mathrm{d}}
\renewcommand{\R}{\mathbb{R}}
\renewcommand{\S}{\mathbb{S}^2}
\renewcommand{\P}{\mathcal{P}}

\begin{document}

\title[Single Pixel X-Ray Transform Reconstructions]{Reconstructions of Single Pixel X-Ray Transforms with Applications in Nuclear-Disarmament Verification}

\author*[1]{\fnm{Christopher} \sur{Fichtlscherer}}\email{fcp@mit.edu}

\author[1]{\fnm{R. Scott} \sur{Kemp}}

\author[2]{\fnm{Christina} \sur{Brandt}}

\affil[1]{\orgdiv{Department of Nuclear Science and Engineering}, \orgname{Massachusetts Institute of Technology}, \orgaddress{\street{77 Massachusetts Avenue}, \city{Cambridge}, \postcode{02139}, \state{Massachusetts}, \country{United States}}}

\affil[2]{\orgdiv{Institute of Mathematics and Computer Science}, \orgname{Universität Greifswald}, \orgaddress{\street{Walther-Rathenau-Str. 47}, \city{Greifswald}, \postcode{17489}, \state{Mecklenburg–Western Pomerania}, \country{Germany}}}

\abstract{In nuclear arms control and disarmament processes, it is crucial to determine
whether an object is a nuclear weapon or not without revealing sensitive
information about it.  At the \textit{MIT: Laboratory for Nuclear Security and
Policy}, such a nuclear verification method was developed, showcasing a
transmission-based approach \cite{kemp2016physical}. This method’s essential
part rests on a mathematical operation, the Single-Pixel X-Ray Transform: a
cone of X-rays transmits an object and the remaining intensity is measured with
a single-pixel detector.  This transformation and the recovery of objects from
dimensionless single-pixel measurements more generally has only been analyzed
to a limited extent.
In this work, we investigate some of the Single Pixel X-Ray Transform's
mathematical properties. More specifically, we show that the Single Pixel X-ray
transform is non-linear, continuous, Fr\'{e}chet-differentiable and
convex.  We also introduce a method of reconstructing an object based only on a
finite number of dimensionless, noisy Single Pixel X-Ray Transform measurement
values. This method is based on Douglas-Rachford splitting and uses total
variation denoising. We present an implementation for this method, focusing on
rotational symmetric objects, as they allow the use of a one-dimensional direct
total variation denoising algorithm \cite{condat2013direct}.\thispagestyle{empty}}

\keywords{Single Pixel X-Ray Transform, TV regularization, Douglas-Rachford Splitting, Nuclear verification}

\maketitle
\section{The Physical Cryptographic Verification Concept}
\label{sec:introduction}
With the expiration of the New START treaty, there is currently no nuclear arms control agreement in force globally and the number of nuclear weapons is rising again after decades of reductions.
At the beginning of 2025, nine countries possessed roughly 12,100 nuclear warheads, of which
about 9,600 are held in military stockpiles \cite{status_nuclear}.
This makes progress on nuclear arms control and disarmament all the more urgent.
Although nuclear disarmament is a general interest, there is
disagreement on how to carry it out in practical terms.
However, there is widespread consensus that such a process must be
monitored under transparent and trustworthy conditions --- \textit{verification}.
The process should be monitored on-site by a team of inspectors.
One part in the process of verifying dismantlement of a nuclear warhead is its
authentication.  The task of warhead authentication is to identify an object
that is a nuclear weapon as a nuclear weapon and to reject objects that are not. Inspectors must be sure that the host
state did not dismantle a mock object instead of the actual warhead
and kept parts of its arsenal at a secret location.  What
makes this task difficult is that the inspectors must not be given
any sensitive information about the nuclear weapon like the isotopic
composition or amount of the contained materials. This
information is considered a military secret, and it is widely argued that its disclosure is prohibited by
articles I and II of the \textit{Treaty on the Non-Proliferation of Nuclear
Weapons} \cite{npt}.  Additionally, nuclear weapon states fear that sharing such
information could weaken their deterrence capabilities.

Nevertheless, some information about the design of warheads is known.
During the Black Sea Experiment in 1989, independent American scientists were
invited by the Soviet Union to participate in a series of experiments that
included measuring the gamma spectrum of a nuclear weapon. The publications
following the Black Sea Experiment likely provide the most precise publicly available
knowledge about the construction of a real nuclear weapon. Among other things,
researchers in this experiment published theoretical models of a
simplified nuclear warhead \cite{fetter1990}.
These models are shown as an example in Figure \ref{fig:fetter}.
\begin{figure*}[ht]
\centering
\begin{tikzpicture}
  \filldraw[color=black, fill=black!60] (0.0, 0.0) circle (2.1);
  \filldraw[color=black, fill=black!30] (0.0, 0.0) circle (2.0);
  \filldraw[color=black, fill=black!70] (0.0, 0.0) circle (1.0);
  \filldraw[color=black, fill=black!50] (0.0, 0.0) circle (0.7);
  \filldraw[color=black, fill=black!80] (0.0, 0.0) circle (0.5);
  \filldraw[color=black, fill=black!00] (0.0, 0.0) circle (0.425);

  \draw[-, line width=0.2mm] (0.0,0.46) -- (3.2,0.46);
  \draw[-, line width=0.2mm] (0.6,0.0) -- (3.2,0.0);
  \draw[-, line width=0.2mm] (0.0,-0.85) -- (3.2,-0.85);
  \draw[-, line width=0.2mm] (0.0,-1.5) -- (3.2,-1.55);
  \draw[-, line width=0.2mm] (0.0,-2.05) -- (3.2,-2.05);

  \fill[black] (0.0,0.46) circle (0.06cm);
  \fill[black] (0.6,0.0) circle (0.06cm);
  \fill[black] (0.0,-0.85) circle (0.06cm);
  \fill[black] (0.0,-1.5) circle (0.06cm);
  \fill[black] (0.0,-2.05) circle (0.06cm);

  \node[] at (7.2, 0.66) {weapon-grade plutonium / uranium (4.25\,-\,5.0\,cm / 5.77\,-\,7.0\,cm)};
  \node[] at (5.6, 0.0) {beryllium reflector (2.0\,cm)};
  \node[] at (6.35, -0.85) {tamper (tungsten / uranium) (3.0\,cm)};
  \node[] at (5.3, -1.55) {high explosive (10.0\,cm)};
  \node[] at (5.3, -2.05) {aluminium case (1.0\,cm)};

\end{tikzpicture}

  \caption{Theoretical models of a nuclear weapon as presented in \cite{fetter1990}.}
\label{fig:fetter}
\end{figure*}
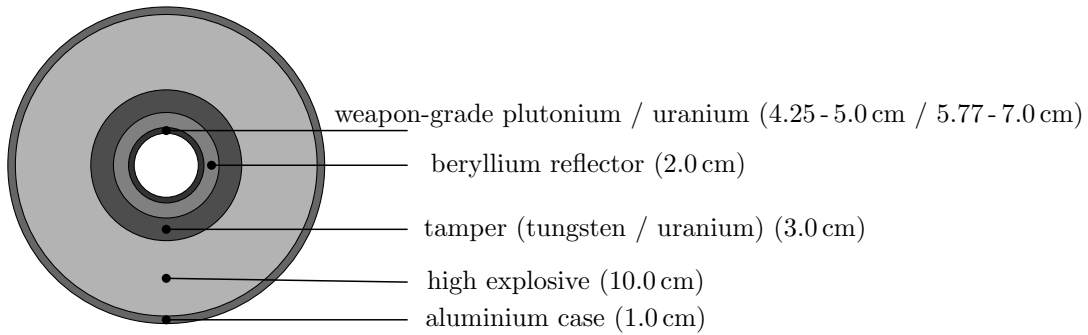
In recent years, physical proof methods have become attractive for the
task of nuclear verification \cite{glaser2014zero}.  The \textit{Physical
Cryptographic Verification Concept} (Figure \ref{fig:mit_setup}) developed at
the \textit{MIT: Laboratory for Nuclear Security and Policy} is based on
transmission nuclear resonance fluorescence \cite{kemp2016physical}.  An
electron beam generates a continuous gamma spectrum (bremsstrahlung).  This
X-ray beam of $2 \, \textrm{-} \, 9$ MeV is sent through the object.
In the object, a fraction of the photons are absorbed, which are
precisely the energy of an allowed transition of the nucleus of an existing
isotope.  This results in narrow absorption lines in the continuous spectrum.
Afterward, this spectrum with narrow lines falls on an
\textit{encrypting foil}, whose composition is only known to the host state ---
this foil contains isotopes which are also present in the nuclear weapon in
quantities unknown to the inspectors.  Nuclei in the foil
are excited by the incoming photons, which causes them to emit photons by nuclear
resonance fluorescence.
The resulting photons are then measured by a High
Purity Germanium (HPGe) detector consisting of only a single pixel.  The single pixel detector is both a practical limitation (HPGe detectors are large and expensive) and central to the information security scheme of the system. Unlike other schemes that aim to form an image of the warhead through an array of detectors, the single-pixel measurement prevents the inspector from directly observing the warhead's geometry. Imaging methods often attempt this by using a mask, but masks are not robust against misalignment or diffraction that might occur along the optical train of the system in a real experimental setup.
In the MIT scheme, this problem is avoided because each measurement returns only a single scalar value.
The measurement is performed for the template and the \textit{treaty
accountable item} (TAI), whereby the inspectors may determine the objects'
relative orientations.  If for several randomly-oriented measurements the measured values are the same for both objects within a specified error tolerance, it is assumed they are identical and the test object is declared to be as valid as the template.
In principle, the inspectors can query the measurement for any orientation of the object relative to the source--detector axis. The object is enclosed in a black box that can be repositioned and rotated freely, so measurements from source positions covering the full sphere around the object are accessible.
\begin{figure*}[ht]
  \centering
\begin{tikzpicture}[>=latex, font=\small]

  \fill[orange!35, opacity=0.5] (1.6,0.05) -- (8.4,0.7) -- (8.4,-0.7) -- (1.6,-0.05) -- cycle;
  \draw[orange!80!black, line width=0.5pt] (1.6,0.05) -- (8.4,0.7);
  \draw[orange!80!black, line width=0.5pt] (1.6,-0.05) -- (8.4,-0.7);

  \draw[green!60!black, line width=2pt, ->] (-0.3,0) -- (1.3,0);

  \fill[gray!70] (1.3,-0.55) rectangle (1.6,0.55);
  \draw[black] (1.3,-0.55) rectangle (1.6,0.55);

  \draw[gray!50, fill=gray!15, fill opacity=0.4, line width=1.5pt] (3.0,-1.4) rectangle (6.8,1.4);

  \fill[gray!40] (4.2,-0.7) rectangle (5.6,0.7);
  \draw[black] (4.2,-0.7) rectangle (5.6,0.7);
  \fill[gray!60] (4.9,0) circle (0.45);
  \fill[gray!30] (4.9,0) circle (0.33);
  \fill[gray!70] (4.9,0) circle (0.17);

  \fill[blue!30] (8.4,-0.85) rectangle (8.65,0.85);
  \draw[black] (8.4,-0.85) rectangle (8.65,0.85);

  \draw[red, line width=1.5pt, dashed] (8.65,0.35) -- (10.6,0.45);
  \draw[red, line width=1.5pt, dashed] (8.65,0.0) -- (10.6,0.0);
  \draw[red, line width=1.5pt, dashed] (8.65,-0.35) -- (10.6,-0.45);

  \fill[gray!80] (10.6,-0.85) rectangle (10.9,0.85);
  \draw[black] (10.6,-0.85) rectangle (10.9,0.85);

  \draw[red!60!black, line width=1.5pt, ->] (10.9,0.0) -- (12.2,0.0);

  \fill[gray!50] (12.2,-0.6) rectangle (13.3,0.6);
  \draw[black, line width=1pt] (12.2,-0.6) rectangle (13.3,0.6);
  \fill[blue!20] (12.4,-0.4) rectangle (13.1,0.4);
  \draw[black] (12.4,-0.4) rectangle (13.1,0.4);

  \draw[black, line width=1pt, ->] (13.3,0.0) -- (14.2,0.0);

  \node[below, align=center, font=\footnotesize\bfseries] at (0.5,-0.25) {Electron\\[-2pt]beam};

  \node[above, align=center, font=\footnotesize\bfseries] at (1.45,0.6) {Bremsstrahlung\\[-2pt]radiator};

  \node[below, align=center, font=\footnotesize\bfseries] at (2.2,-0.55) {X-ray\\[-2pt]beam};

  \node[below, font=\footnotesize\itshape] at (4.9,-1.5) {Shielded room};

  \node[above, font=\footnotesize\bfseries] at (4.9,0.75) {TAI};

  \node[below, align=center, font=\footnotesize\bfseries] at (8.5,-1.1) {Encrypting foil};

  \node[above, align=center, font=\footnotesize\bfseries] at (10.75,0.9) {Tungsten\\[-2pt]low-energy filter};

  \node[above, align=center, font=\footnotesize\bfseries] at (12.75,0.65) {HPGe\\[-2pt]detector};

  \node[right, align=center, font=\footnotesize\itshape] at (14.2,0.0) {single\\[-2pt]value};

\end{tikzpicture}
  \caption{A schematic setup with which the measurements are made according
            to the physical cryptographic method \cite{kemp2016physical}.
            During the procedure, an \textit{Electron beam} is directed through a \textit{Bremsstrahlung radiator}, generating a continuous \textit{{X}-ray beam}.
            This {X}-ray beam subsequently transmits through the \textit{treaty accountable item (TAI)}.
            As it does so, a portion of the radiation is absorbed, leading to the formation of isotope-specific absorption lines.
            To ensure the data's confidentiality, resonance fluorescence photons are produced using a specialized \textit{Encrypting foil}.
            Any low-energy photons that might carry sensitive information are then filtered out with a \textit{Tungsten low-energy filter}.
            The remaining radiation is captured using a single-pixel \textit{HPGe detector}, producing a single value. This value can be compared across different objects for consistency and verification.}
  \label{fig:mit_setup}
\end{figure*}
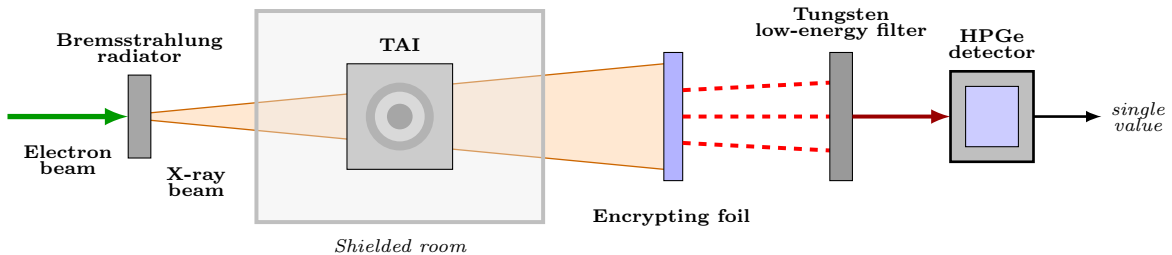
The underlying mathematical model that enables geometric comparisons from single-pixel scalar measurements is the Single Pixel X-Ray Transform (also called K-transform $\mathcal{K}$) introduced by R.~S.~Kemp and R.~R.~Macdonald \cite{kemp2016physical}.
In \cite{vavrek2016monte} Monte-Carlo simulations were
performed on the Physical Cryptographic Verification Concept. The
publication \cite{vavrek2018experimental} describes an experimental
proof-of-concept of the method. Several other verification schemes (\cite{hecla2018nuclear}, \cite{engel2019}) have also adopted the Single Pixel
X-ray transform. The more general paper \cite{gilbert2017single} discusses
the method of single-pixel imaging in radiographic inspections. It analyses the
relationship between information security and probability that inspectors can
trust the result of single-pixel imaging on the basis of several test
patterns. To the best of our knowledge, besides \cite{lai2021single}, there has been no work published dealing with the mathematical foundations or the reconstruction of the Single Pixel X-Ray Transform. In \cite{lai2021single}, stability estimates for the K-Transform are derived, a reconstruction formula based on the linearization at $f=0$ is established, uniqueness is proven both in the Euclidean and Riemannian setting, and a non-regularized numerical reconstruction is computed using a Gauss-Newton method. In contrast, our work introduces a fan-beam generalization of the K-Transform, proves additional properties such as convexity and continuous Fr\'{e}chet differentiability, and addresses the fully nonlinear regularized inverse problem using Douglas-Rachford splitting with total variation regularization. This approach provides more robust reconstructions under higher noise levels and for objects with larger densities, at the cost of increased computational effort.

Several works deal with the more general problem of single-pixel
applications, such as ghost imaging using a helicity-dependent hologram
\cite{liu2017}, 3D computational imaging \cite{sun2013}, and single-pixel
imaging with sub-Nyquist sampling \cite{Yu2020}.
The article \cite{edgar2019} gives an overview, explains the underlying
functionality, and lists some applications.
In \cite{burger2019} a more mathematical introduction and discussion of different
reconstruction methods are given. Analogous to our approach, the nonlinear reconstruction problem was
solved with the help of total-variation (TV) regularization.
However, note that, in these other approaches, single-pixel imaging is based on a combination of coded masks to achieve the desired
resolution with the help of compressed sensing techniques. This is contrary to
the applications discussed in this paper, where the single-detector is used without any
mask to avoid any spatial resolution.

This first section introduced the Physical
Cryptographic Verification Concept designed to authenticate warheads without
disclosure of sensitive information.  The operation of the Single Pixel X-ray
transform is an essential part of this concept. In the next section, we present
the mathematical model of the Single Pixel X-Ray Transform operator.  The third
section investigates the properties of this operator. Object reconstruction
from Single Pixel X-Ray Transform measurements and the discretization is
described in the fourth and fifth section, respectively. Finally, section six
shows numerical results of the reconstruction. We end with a conclusion of our
results and an outlook on open questions.
\section{Mathematical Model of the Cryptographic Verification Concept}
\label{sec:k-transform_definition}
The Single Pixel X-Ray Transform $\mathcal{K}$ does not describe the entire cryptographic verification concept
but only a part of it:
A gamma source radiates a cone of mono-energetic X-rays of known intensity $I_{0}$ \label{term:i0}
through an object.
In the object, a part of the radiation is absorbed, with
different materials absorbing to different degrees.
A \textit{single-pixel detector} \label{term:spd} measures the remaining part of the radiation.
This detector works like a single-pixel camera that receives all incoming photons in a
specific region and does not consider any spatial information.
This measurement thus results in a single measured value of $I_{1}$.
Figure \ref{fig:ktra_schem} illustrates this schematically.
Note that the encrypting foil, the filter, and the entire nuclear resonance
fluorescence absorption concept are not considered because they are not part of
the Single Pixel X-Ray Transform.
The Single Pixel X-Ray Transform is the quotient of measured $I_{1}$ and outgoing $I_{0}$ intensity,
\begin{align}
  \dfrac{I_{1}}{I_{0}} = \mathcal{K}f(r),
\end{align}
and maps by this any object $f$ to a value in $[0,1]$.
The closer this value is to 1, the more of the original intensity is captured
on the single-pixel detector. The smaller the value, the more intensity was
absorbed in the object or did not reach the detector.
The Single-Pixel X-ray Transform can be seen as a cone-beam transform using a single-pixel detector.
While the cone-beam transform maps an object to a set of individual measurements (X-ray transforms along all lines in a cone), the Single-Pixel X-ray Transform maps to the average of all these absorption quotients.

This dimensional reduction from three to one dimension makes it possible to compare single values of two objects, with no possibility to reconstruct the geometry from only a very small measurement set. Furthermore, the non-linearity of the Single-Pixel X-ray Transform $\mathcal{K}$ ensures that no conclusions about the total absorption value, and thus the total mass of the object, can be drawn from a single value.

We note that the mono-energetic model based on the Beer-Lambert law (Equation~\ref{eq:beer_2}) is a simplification. Real nuclear materials exhibit energy-dependent absorption coefficients, and the measurement system described above uses a continuous bremsstrahlung spectrum. However, the mathematical framework developed in the following sections applies to any positive absorption function. A generalization accounting for energy-dependent absorption is discussed in Section~\ref{sec:discussion_and_outlook}.

Inspired by the theoretical nuclear-weapon model in \cite{fetter1990}, we will assume that the object is rotationally symmetric, thus the support is a ball $\Omega=B_{1}(0) \subset \mathbb{R}^{3}$.  For simplicity, the ball is
centered at the origin and the radius is one.
\begin{definition}
  An object $f \in L^{1}(\Omega, \mathbb{R}^{+}_{0})$ is a rotational symmetric mapping from $\Omega=B_{1}(0) \subset \mathbb{R}^{3}$
  to its density, which is a positive real number.
  \label{term:omega}
  \label{def:object}
\end{definition}
If a beam of X-rays passes through the object, it loses intensity according to the Beer-Lambert law
\cite{beer1852bestimmung}:
\begin{align}
  I_{1} = I_{0} \cdot \text{e}^{- \int_{s_{0}}^{s_{1}}f(s) \del s}.
  \label{eq:beer_2}
\end{align}

With this idea in mind, the X-ray transform $\mathcal{P} f(x, \vartheta)$ is the
line integral along the line though the point $x$ in direction of $\vartheta$.

Denote with $\S$ the sphere in $\R^3$. Since the object $f$ is in $L^{1}$,
$\mathcal{P} f(x, \vartheta)$ is defined for every object $f$ and $\left\{ \left(x,\vartheta\right)\in\mathbb{R}^3\times \S\right\}$.
We thus have the linear bounded X-ray operator
\begin{align}
  \mathcal{P}&: L^{1}(\mathbb{R}^3) \rightarrow L^1(T)\,, \\
  \mathcal{P}f(x,\vartheta) &\coloneqq \int_{\R}  f(x+t \vartheta ) \del t\,
\end{align}
with $T:=\left\{ \left(x,\vartheta\right)\in\vartheta^{\bot}\times \S\right\} $.
Here, $\vartheta^{\bot}=\left\{ x\in\mathbb{R}^{3},x^{T}\vartheta=0\right\} $ and the orthogonal projection onto $\vartheta^{\bot}$ is denoted by $E_{\vartheta}:\mathbb{R}^{3}\rightarrow\vartheta^{\bot}$,   $E_{\vartheta}x=x-\left(x\cdot\vartheta\right)\vartheta$.
Note that the back projection
\begin{align}
P^{*} & :L^{\infty}\left(T\right)\rightarrow L^{\infty}\left(\mathbb{\mathbb{R}}^{3}\right)\,,\\
P^{*}g(x) & =\int_{\S}g\left(E_{\vartheta}x,\vartheta\right)\d\vartheta\,
\end{align}
is simply the adjoint operator of $\mathcal{P}$, see e.g., \cite{natterer2001}.
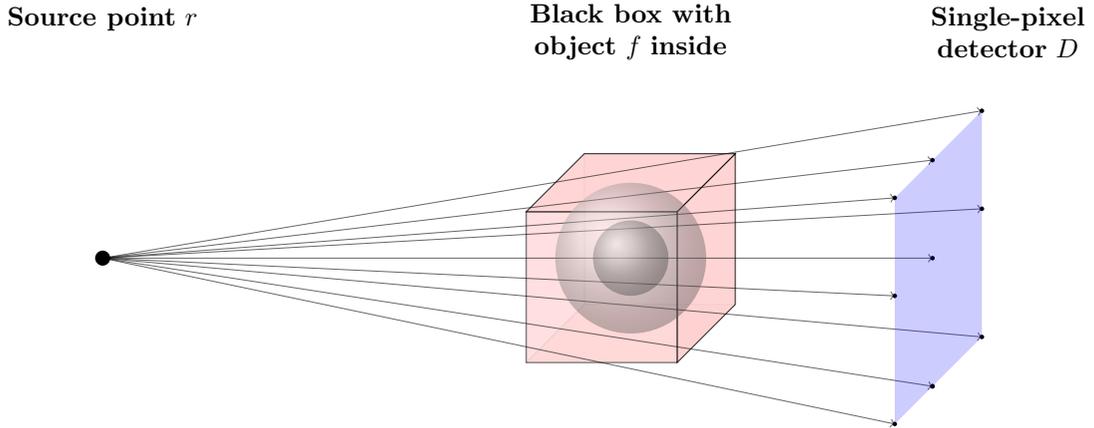
\begin{figure*}[ht]
  \centering
  \begin{tikzpicture}

\newcommand{\Dx}{-6}
\newcommand{\Dy}{1}
\newcommand{\Dz}{1}

\newcommand{\Px}{5}

\newcommand{\Pa}{-0.7}
\newcommand{\Pb}{1}
\newcommand{\Pc}{2.3}

\newcommand{\Depth}{2}
\newcommand{\Height}{2}
\newcommand{\Width}{2}

\coordinate (O) at (0,0,0);
\coordinate (A) at (0,\Width,0);
\coordinate (B) at (0,\Width,\Height);
\coordinate (C) at (0,0,\Height);
\coordinate (D) at (\Depth,0,0);
\coordinate (E) at (\Depth,\Width,0);
\coordinate (F) at (\Depth,\Width,\Height);
\coordinate (G) at (\Depth,0,\Height);

\draw[black,fill=red!80, opacity=0.1] (O) -- (C) -- (G) -- (D) -- cycle;
\draw[black,fill=red!30, opacity=0.1] (O) -- (A) -- (E) -- (D) -- cycle;
\draw[black,fill=red!10, opacity=0.1] (O) -- (A) -- (B) -- (C) -- cycle;
\draw[black,fill=red!20, opacity=0.8] (D) -- (E) -- (F) -- (G) -- cycle;
\draw[black,fill=red!20, opacity=0.6] (C) -- (B) -- (F) -- (G) -- cycle;
\draw[black,fill=red!20, opacity=0.8] (A) -- (B) -- (F) -- (E) -- cycle;

\fill[black] (\Dx, \Dy, \Dz) circle (0.1cm);
\draw[->, line width=0.1mm, opacity=0.6] (\Dx, \Dy, \Dz) -- (\Px, \Pa, \Pa);
\draw[->, line width=0.1mm, opacity=0.6] (\Dx, \Dy, \Dz) -- (\Px, \Pb, \Pa);
\draw[->, line width=0.1mm, opacity=0.6] (\Dx, \Dy, \Dz) -- (\Px, \Pc, \Pa);
\draw[->, line width=0.1mm, opacity=0.6] (\Dx, \Dy, \Dz) -- (\Px, \Pc, \Pb);
\draw[->, line width=0.1mm, opacity=0.6] (\Dx, \Dy, \Dz) -- (\Px, \Pa, \Pb);
\draw[->, line width=0.1mm, opacity=0.6] (\Dx, \Dy, \Dz) -- (\Px, \Pb, \Pb);
\draw[->, line width=0.1mm, opacity=0.6] (\Dx, \Dy, \Dz) -- (\Px, \Pc, \Pc);
\draw[->, line width=0.1mm, opacity=0.6] (\Dx, \Dy, \Dz) -- (\Px, \Pa, \Pc);
\draw[->, line width=0.1mm, opacity=0.6] (\Dx, \Dy, \Dz) -- (\Px, \Pb, \Pc);

\fill[black] (\Px, \Pa, \Pa) circle (0.03cm);
\fill[black] (\Px, \Pa, \Pb) circle (0.03cm);
\fill[black] (\Px, \Pb, \Pa) circle (0.03cm);
\fill[black] (\Px, \Pc, \Pa) circle (0.03cm);
\fill[black] (\Px, \Pc, \Pb) circle (0.03cm);
\fill[black] (\Px, \Pa, \Pb) circle (0.03cm);
\fill[black] (\Px, \Pb, \Pb) circle (0.03cm);
\fill[black] (\Px, \Pc, \Pc) circle (0.03cm);
\fill[black] (\Px, \Pa, \Pc) circle (0.03cm);
\fill[black] (\Px, \Pb, \Pc) circle (0.03cm);

  \node[align=left] at (\Dx, \Dy+3, \Dz){\textbf{Source point} $r$\\};
  \node[align=center] at (1, \Dy+3, \Dz) {\textbf{Black box with}\\\textbf{object} $f$ \textbf{inside}};
  \node[align=center] at (\Px+1, \Dy+3, \Dz) {\textbf{Single-pixel}\\ \textbf{detector} $D$};

\shade[ball color = gray!40, opacity = 0.4] (1,1,1) circle (1cm);
\shade[ball color = gray!80, opacity = 0.4] (1,1,1) circle (0.5cm);

\fill[blue,opacity=0.2] (\Px,\Pa,\Pa) -- (\Px,\Pa,\Pc) -- (\Px,\Pc,\Pc) --  (\Px,\Pc,\Pa) -- cycle;
\end{tikzpicture}
  \caption{On the left, a source $r$, which emits X-rays of intensity $I_{0}$,
          is shown.
          In this cone, the black box, containing an unknown object $f$, is placed.
          While the X-rays pass through the black box, a part of their intensity is absorbed.
          The remaining intensity $I_{1}$ is measured with a single-pixel detector $D$}
  \label{fig:ktra_schem}
\end{figure*}
In this notation, we define the \textit{fan-beam Single Pixel X-Ray Transform}.
\begin{definition}
  \label{term:ktf}
  Let $f$ be an object and $r$ be an arbitrary point in $\mathbb{R}^{3}$.
  Let $D$ be a set of points, which has the physical interpretation of the detector.
  Further, let
  $\S_{D}$ be the set of all angles, such that the lines passing through the point $r$ also
  pass through at least one point of the detector $D$.
  Then, the \textit{fan-beam Single Pixel X-Ray Transform} of the object $f$ is defined as

\begin{align}
  \mathcal{K}_{D}&: L^{1}(\Omega, \mathbb{R}^{+}_{0}) \rightarrow L^\infty(\mathbb{R}^{3},[0,1])\,, \\
  \mathcal{K}_{D}f(r) &\coloneqq \int_{\S_{D}(r)}\text{e}^{- \mathcal{P}f(r, \vartheta)} \del \vartheta\,.
\end{align}
\end{definition}
The point $r \in \mathbb{R}^{3}$ has the physical interpretation of the source, while $D$
has the physical interpretation of the detector, see Figure
\ref{fig:ktra_schem}.
The fan-beam Single Pixel X-Ray Transform is a generalization of the Single Pixel X-Ray Transform's original definition
from \cite{kemp2016physical} which is given by the following Definition \ref{term:ktr}.
\begin{definition}
  \label{term:ktr}
For every object $f$ every arbitrary point $r \in \mathbb{R}^{3}$
  the \textit{Single Pixel X-Ray Transform} of the object $f$ is defined as
\begin{align}
  \mathcal{K}&: L^{1}(\Omega, \mathbb{R}^{+}_{0}) \rightarrow L^\infty(\mathbb{R}^{3},[0,1])\,, \\
  \mathcal{K}f(r) &\coloneqq \int_{\S}\text{e}^{- \mathcal{P}f(r,\vartheta)} \del \vartheta\,.
\end{align}
\end{definition}
While in the original definition of the Single Pixel X-Ray Transform the integral is computed over all lines
passing through the point $r$, the generalization allows integrating only over the lines
in whose direction gamma rays are emitted and later measured.

The set $\S_{D}$ plays a role analogous to the Orlov condition in classical three-dimensional parallel-beam tomography \cite{orlov1975}: in that setting, the set of projection directions must cover the unit sphere such that no great circle remains unmeasured, which guarantees sufficient data for reconstruction. Similarly, $\S_{D}$ determines the angular coverage of the fan-beam Single Pixel X-Ray Transform. For the original (non-fan-beam) definition, the integration extends over the full sphere~$\S$, so the Orlov condition is trivially satisfied. In the fan-beam case, the detector geometry $D$ determines $\S_{D}$, and a larger angular coverage improves the reconstruction quality.

We assume the fan-beam transmits the entire object as in the examples
in Figure \ref{fig:ktra_schem}.
For this case, there is a linear relationship between the fan-beam Single Pixel X-Ray Transform $\mathcal{K}_{D}$
and the original Single Pixel X-Ray Transform $\mathcal{K}$ given with \label{term:usr}
\begin{align}
  \mathcal{K}_{D}f(r) = \dfrac{\vert \mathbb{S}^{2}_{D} (r) \vert}{\vert \mathbb{S}^{2} \vert} \cdot \mathcal{K}f(r)\,.
\end{align}
The exponential part in the K-transform becomes one for all lines $l$ that are
not intersecting the object $f$. This guarantees the linear relationship
between the two formulations of the K-transform. We will work with the original
definition of the Single Pixel X-Ray Transform in the following analysis. However, we
use the fan-beam Single Pixel X-Ray Transform for the computer implementation
since this accelerates the computation.

Determining for an object $f$ the value of a single-pixel measurement $\mathcal{K}(f)=g$ is the
\textbf{forward problem} of the Single Pixel X-Ray Transform.
A single-pixel measurement will be disturbed by noise $g^{\delta} = g + \delta$.
In the following, we will approximate the Poisson noise by Gaussian noise.
The ill-posed \textbf{inverse problem} related to the forward operation of
the Single Pixel X-Ray Transform is to reconstruct an object from several noisy Single Pixel X-Ray Transform measurement values.
\section{Properties of the Single Pixel X-Ray Transform}
\label{sec:k-transform_properties}
In this section, we establish some basic properties of the Single Pixel X-Ray Transform.
The non-linearity of the Single Pixel X-Ray Transform results directly from the non-linearity
of the exponential function.
\begin{proposition}
The Single Pixel X-Ray Transform is a convex, non-linear, continuous operator.
\label{prop_1}
\end{proposition}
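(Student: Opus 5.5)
The plan is to write $\mathcal{K} = \mathcal{I}\circ\mathcal{E}\circ\mathcal{P}$. Here $\mathcal{P}$ is the linear bounded X-ray operator, $\mathcal{E}(h) = \mathrm{e}^{-h}$ acts pointwise, and $\mathcal{I}(u)(r) = \int_{\S} u(r,\vartheta)\del\vartheta$ integrates over directions. Each of the three properties is then reduced to a property of the scalar map $t\mapsto \mathrm{e}^{-t}$ on $[0,\infty)$, combined with linearity and monotonicity of $\mathcal{P}$ and of the integral. Throughout, $f\ge 0$, so $\mathcal{P}f(r,\vartheta)\in[0,\infty]$ and the exponential takes values in $[0,1]$. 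Since $\mathcal{K}f$ is defined for every $r$, I will work pointwise in $r$ and, where needed, take the supremum over $r$ for the $L^\infty$ statements.

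\textbf{Convexity.} Let $f_1,f_2\in\Leins$ and $\lambda\in[0,1]$. The domain is a convex cone, since convex combinations of nonnegative rotationally symmetric functions stay in it. Linearity of $\mathcal{P}$ gives
\begin{align}
\mathcal{P}(\lambda f_1+(1-\lambda)f_2)(r,\vartheta)=\lambda\mathcal{P}f_1(r,\vartheta)+(1-\lambda)\mathcal{P}f_2(r,\vartheta).
\end{align}
Convexity of $t\mapsto \mathrm{e}^{-t}$ then yields the pointwise inequality $\mathrm{e}^{-\mathcal{P}(\lambda f_1+(1-\lambda)f_2)}\le \lambda \mathrm{e}^{-\mathcal{P}f_1}+(1-\lambda)\mathrm{e}^{-\mathcal{P}f_2}$, with the convention $\mathrm{e}^{-\infty}=0$. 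Integrating over $\S$ preserves the inequality, so $\mathcal{K}(\lambda f_1+(1-\lambda)f_2)(r)\le\lambda\mathcal{K}f_1(r)+(1-\lambda)\mathcal{K}f_2(r)$ for every $r$. This is convexity with respect to the pointwise order in $L^\infty$.

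\textbf{Non-linearity.} It suffices to give a counterexample. Take $f_0\equiv 0$. Then $\mathcal{K}f_0(r)=|\S|=4\pi\neq 0$, so $\mathcal{K}$ is not even additive: $\mathcal{K}(f_0+f_0)=4\pi\ne 8\pi$. Alternatively, take $f=c\,\chi_{\Omega}$ and $r=0$. Every line through the origin meets $\Omega$ in a segment of length $2$, so $\mathcal{K}f(0)=4\pi \mathrm{e}^{-2c}$, which is not linear in $c$.

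\textbf{Continuity.} This is the step needing care, because $\mathcal{P}f(r,\cdot)$ for a fixed source $r$ is not controlled by $\norm[1]{f}$ in general: the data are line integrals through a single point. I would use the elementary bound $|\mathrm{e}^{-a}-\mathrm{e}^{-b}|\le |a-b|$ for $a,b\ge0$ to get
\begin{align}
|\mathcal{K}f_1(r)-\mathcal{K}f_2(r)|\le\int_{\S}\bigl|\mathcal{P}(f_1-f_2)(r,\vartheta)\bigr|\del\vartheta .
\end{align}
Writing the right-hand side in polar coordinates about $r$ gives $2\int_{\R^3}|f_1-f_2|(x)\,|x-r|^{-2}\del x$. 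For $r$ outside a neighbourhood of $\overline\Omega$, say $\operatorname{dist}(r,\Omega)\ge\varepsilon$, this is bounded by $2\varepsilon^{-2}\norm[1]{f_1-f_2}$, which gives Lipschitz continuity there. For $r$ near or inside $\Omega$, the kernel $|x-r|^{-2}$ is singular. There I would combine the bound $|\mathrm{e}^{-a}-\mathrm{e}^{-b}|\le\min(1,|a-b|)\le 1$ with rotational symmetry, using radial integration to control the contribution of a small ball around $r$ separately. If a uniform estimate in $r$ fails, I would settle for continuity pointwise in $r$ together with dominated convergence, which uses that the integrand is bounded by $1$ on the finite-measure set $\S$. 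I expect this near-source singularity to be the main obstacle; everything else follows from convexity and the Lipschitz property of $\mathrm{e}^{-t}$ on $[0,\infty)$.
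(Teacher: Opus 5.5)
You handle convexity exactly as the paper does. Your non-linearity counterexample ($\mathcal{K}0\equiv4\pi\neq0$) is more concrete than the paper's bare ``$\neq$''. Your exterior estimate $|\mathcal{K}f_1(r)-\mathcal{K}f_2(r)|\le 2\varepsilon^{-2}\norm[1]{f_1-f_2}$ for $\operatorname{dist}(r,\Omega)\ge\varepsilon$ is correct, and it is more rigorous than the paper's argument. The paper proves continuity by dominated convergence after asserting that $\P f_n(r,\vartheta)\to\P f(r,\vartheta)$ pointwise ``by continuity of $\P$''. That inference is not valid: $\P$ is continuous only into $L^1(T)$, and the lines through a fixed source form a null set in $T$.

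The gap in your proposal is the case of sources near or inside $\Omega$. Your fallback (pointwise continuity in $r$ via dominated convergence) needs $\P f_n(r,\vartheta)\to\P f(r,\vartheta)$ for a.e.\ $\vartheta$. That is precisely what does not follow from $f_n\to f$ in $L^1$, so the fallback repeats the paper's gap rather than closing yours.

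More importantly, no argument can give a uniform estimate there, because continuity into $L^\infty(\R^3)$ is false. Take $\rho_n\to0$ and $f_n=\rho_n^{-1}\chi_{B_{\rho_n}(0)}$. These are nonnegative and rotationally symmetric, with $\norm[1]{f_n}=\tfrac43\pi\rho_n^2\to0$. Yet every line through a point $r$ with $|r|<\rho_n/2$ meets $B_{\rho_n}(0)$ in a chord of length at least $\sqrt3\,\rho_n$. Hence $\mathcal{K}f_n\le4\pi\mathrm{e}^{-\sqrt3}$ on a set of positive measure, while $\mathcal{K}0\equiv4\pi$. The paper's alternative route through Lemma~\ref{lem:conti} fails for the same reason, since it bounds $\P h(r,\cdot)$ by $\lVert\P\rVert\,\norm[1]{h}$.

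Two things are provable. The first is your Lipschitz bound for sources outside $\overline\Omega$, which is the physically relevant case. The second, using rotational symmetry, is continuity for each fixed $r\neq0$:

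\begin{itemize}
\item Let $\beta$ be the angle between $\vartheta$ and $r$. On the directions with $|\cos\beta|<\eta$, a set of measure $4\pi\eta$, bound the integrand by $1$.
\item On the remaining directions, write $h=|f_1-f_2|=\psi(|x|)$ and $d=|r|\sin\beta$. Use $\P h(r,\vartheta)=2\int_d^1\psi(s)\,s\,(s^2-d^2)^{-1/2}\del s$ together with $(|r|^2-d^2)^{1/2}\ge\eta|r|$.
\item This gives $|\mathcal{K}f_1(r)-\mathcal{K}f_2(r)|\le4\pi\eta+2\eta^{-1}|r|^{-2}\norm[1]{f_1-f_2}$.
\end{itemize}

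This bound is uniform on $\{|r|\ge\epsilon\}$ and degenerates as $r\to0$, consistent with the counterexample.
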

\begin{proof}
  Let $f_{1}, f_{2} \in \Leins$ be arbitrary objects, $r \in \mathbb{R}^{3}$ and
  $\lambda \in [0,1]$.
  The non-linearity can be shown by
  \begin{align*}
    \mathcal{K}(f_{1} + f_{2})(r) &=
    \int_{\S}\text{e}^{- \mathcal{P}(f_{1}+f_{2})(r,\vartheta)} \del \vartheta  \\
    &\neq \int_{\S}\text{e}^{- \mathcal{P}f_{1}(r,\vartheta)} + \text{e}^{- \mathcal{P}f_{2}(r,\vartheta)} \del \vartheta \\
        &= \mathcal{K}f_{1}(r) + \mathcal{K}f_{2}(r).
  \end{align*}
  The convexity can be shown by making use of the linearity of the X-ray transform and   the convexity of the exponential function:

      \begin{align*}
      &\mathcal{K}(\lambda f_{1} + (1- \lambda) f_{2})(r) \\
      &= \int_{\S} \text{e}^{-(\lambda \mathcal{P}f_{1}(r,\vartheta) +
      (1 - \lambda)\mathcal{P} f_{2}(r,\vartheta))} \del \vartheta \\
       &\leq \int_{\S} \lambda \text{e}^{-\mathcal{P}f_{1}(r,\vartheta)}
      + (1 - \lambda) \text{e}^{-\mathcal{P}f_{2}(r,\vartheta)} \del \vartheta \\
      &= \lambda \mathcal{K}f_{1}(r) + (1 - \lambda)  \mathcal{K}f_{2}(r).
    \end{align*}
The continuity of $\mathcal{K}$ can be established rigorously as follows: Let $\{f_n\}$ be a sequence in $L^1(\Omega, \mathbb{R}_0^+)$ with $f_n \to f$. By the continuity of the X-ray transform $\P$, we have $\P f_n \to \P f$, which implies $\text{e}^{-\P f_n} \to \text{e}^{-\P f}$ pointwise. Since $f_n \geq 0$, we have $\lvert\text{e}^{-\P f_n}\rvert \leq 1$ for all $n$, providing a uniform bound. By the Dominated Convergence Theorem, it follows that $\mathcal{K}f_n \to \mathcal{K}f$, establishing the continuity of $\mathcal{K}$. Alternatively, continuity also follows directly from the Fr\'{e}chet differentiability established in Lemma~\ref{lem:conti}.
\end{proof}
The non-linearity of the forward operator $\mathcal{K}$ makes it much harder to solve
the inverse problem of the Single Pixel X-Ray Transform --- i.e.,
to reconstruct an object from several Single Pixel X-Ray Transform measurement values since there is no unified
theory for solving non-linear inverse problems.
The differentiability of $\mathcal{K}$ allows us to use a Newton method for solving the sub-problem involving the data term  when applying the Douglas-Rachford splitting for the computation of the regularized reconstruction.

\begin{lemma}
  The operator $\mathcal{K}$ is continuously differentiable with derivative
\begin{align*}
D  \mathcal{K}\left(f\right)h & =-\int_{\S} \text{e} ^{-\P f\left(r,\vartheta\right)}\P h\left(r,\vartheta\right)\d\vartheta \\ & =-\left(\P^{*}\text{e}^{-\P f\left(r,\cdot\right)},h\right)_{L^{\infty}\left(\mathbb{R}^{3}\right)\times L^{1}\left(\mathbb{R}^{3}\right)}.
\end{align*}
\label{lem:conti}
\end{lemma}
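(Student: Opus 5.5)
The plan is to fix $r\in\mathbb{R}^3$, treat $\mathcal{K}$ as a map $f\mapsto \mathcal{K}f(r)$, and verify the Fréchet definition directly. The candidate derivative $A_f h:=-\int_{\S}\text{e}^{-\P f(r,\vartheta)}\P h(r,\vartheta)\d\vartheta$ is linear in $h$. To see that it is bounded, use $0\le \text{e}^{-\P f}\le 1$ for $f\ge 0$ together with $|A_f h|\le \int_{\S}|\P h(r,\vartheta)|\d\vartheta$. This last integral equals $\int_{\Omega}|h(x)|\,|x-r|^{-2}\,\d x$ after the polar-coordinate change $x=r+t\vartheta$. It is therefore finite and bounded by $C_r\norm[1]{h}$ whenever $r\notin\overline{\Omega}$, or more generally whenever we work with the weighted norm; for $r\in\Omega$ I would either restrict to sources outside the ball, as in the physical setup, or note the weight $|x-r|^{-2}$ explicitly. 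The second expression in the statement then follows by Fubini and the substitution $x=r+t\vartheta$: $\int_{\S}\text{e}^{-\P f}\P h\,\d\vartheta$ becomes the pairing of $h$ with the back-projected weight function, which is exactly $(\P^*\text{e}^{-\P f(r,\cdot)},h)$ in the $L^\infty\times L^1$ duality.

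For the remainder estimate, write $a=\P f(r,\vartheta)$ and $b=\P h(r,\vartheta)$. Taylor's theorem with Lagrange remainder gives $\text{e}^{-(a+b)}-\text{e}^{-a}+\text{e}^{-a}b=\tfrac12 \text{e}^{-\xi}b^2$, with $\xi$ between $a$ and $a+b$. Since $f$ and $f+h$ are admissible nonnegative objects, both $a\ge0$ and $a+b\ge0$, so $\xi\ge0$ and $\text{e}^{-\xi}\le1$. Hence
\begin{align*}
|\mathcal{K}(f+h)(r)-\mathcal{K}f(r)-A_fh|\le \tfrac12\int_{\S}|\P h(r,\vartheta)|^2\d\vartheta .
\end{align*}
If $f+h$ were not nonnegative, I would instead use the uniform bound $\text{e}^{-\xi}\le \text{e}^{|b|}$.

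The main obstacle is showing that this quadratic term is $o(\norm[1]{h})$. The reason is that $\P h(r,\cdot)$ is only $L^1$ in $\vartheta$ when $h\in L^1$, so $\int|\P h|^2$ need not be controlled by $\norm[1]{h}^2$. My first attempt is the interpolation $|b|^2\cdot\tfrac12 \text{e}^{-\xi}\le |b|\min(|b|,1)$, which follows from the mean value bound $|\text{e}^{-(a+b)}-\text{e}^{-a}+\text{e}^{-a}b|\le 2|b|$ when $\xi\ge0$. This gives the remainder $\le\int_{\S}|b|\min(|b|,1)\d\vartheta$. Dividing by $\norm[1]{h}$ and normalizing $\tilde b=b/\norm[1]{h}$, the family $\tilde b$ is bounded in $L^1(\S)$ by the first step. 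The factor $\min(|b|,1)$ tends to $0$ in measure as $\norm[1]{h}\to0$, so the quotient vanishes provided the family $|\tilde b|$ is uniformly integrable. Uniform integrability is exactly where care is needed. I expect to secure it either by restricting $h$ to a bounded subset of $L^\infty$ (realistic densities) or by accepting Gâteaux/Hadamard differentiability along such directions, and I would state that restriction explicitly.

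Continuity of $f\mapsto A_f$ in operator norm then follows from
\begin{align*}
|A_{f_1}h-A_{f_2}h|\le\int_{\S}|\text{e}^{-\P f_1}-\text{e}^{-\P f_2}|\,|\P h|\,\d\vartheta ,
\end{align*}
combined with $|\text{e}^{-a_1}-\text{e}^{-a_2}|\le\min(|a_1-a_2|,1)$ and the same dominated-convergence/uniform-integrability argument used in the continuity part of Proposition~\ref{prop_1}.
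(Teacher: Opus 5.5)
Your strategy is the paper's: expand $\text{e}^{-(a+b)}$ to second order and control $\int_{\S}(\P h(r,\vartheta))^2\d\vartheta$. The paper closes this step with $\Vert\text{e}^{-\P h}-1+\P h\Vert\le\frac12\Vert\P\Vert^2\Vert h\Vert_1^2+\mathcal{O}(\Vert h\Vert_1^3)$. That estimate tacitly needs $h\mapsto\P h(r,\cdot)$ to be bounded from $L^1(\Omega)$ into $L^2(\S)$, but it is bounded only into $L^1(\S)$, and only for $r\notin\overline{\Omega}$.

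You are right to distrust this step. Your proposal leaves it open, however, and it cannot be closed: the uniform integrability you need fails, and $f\mapsto\mathcal{K}f(r)$ is not Fr\'echet differentiable with respect to $\Vert\cdot\Vert_1$. Take $|r|>1$, $f=0$ and $h_\epsilon=c\,\mathbf{1}_{T_\epsilon}$. Here $T_\epsilon$ is the part of $\Omega$ inside the cone with vertex $r$, axis through the origin and half-angle $\epsilon$. Then:
\begin{itemize}
\item $\P h_\epsilon(r,\cdot)=2c(1+o(1))$ on two antipodal caps, each of area $\pi\epsilon^2(1+o(1))$, and it vanishes elsewhere;
\item $\Vert h_\epsilon\Vert_1=\frac{c}{3}\pi\epsilon^2(6|r|^2+2)(1+o(1))$;
\item the remainder $\int_{\S}(\text{e}^{-b}-1+b)\d\vartheta$ equals $2\pi\epsilon^2(\text{e}^{-2c}-1+2c)(1+o(1))$.
\end{itemize}
So the remainder divided by $\Vert h_\epsilon\Vert_1$ tends to $6(\text{e}^{-2c}-1+2c)/\bigl(c(6|r|^2+2)\bigr)>0$, although $\Vert h_\epsilon\Vert_1\to0$. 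With $c=1$ the family is bounded by $1$ in $L^\infty$. Hence your first remedy (restrict $h$ to an $L^\infty$-bounded set but still divide by $\Vert h\Vert_1$) fails: $\tilde b$ concentrates on the shrinking caps and is not uniformly integrable.

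The same perturbation defeats your last paragraph; the paper's proof does not address continuity of the derivative at all. With $\vartheta_x=(x-r)/|x-r|$ one has $D\mathcal{K}(f)h=-2\int_{\Omega}h(x)\,\text{e}^{-\P f(r,\vartheta_x)}|x-r|^{-2}\d x$. The factor $2$ appears because $\vartheta$ and $-\vartheta$ parametrize the same line. This weight, rather than the full back-projection $\P^*$, is what the pairing in the statement really means. The operator norm on $L^1$ is therefore a weighted essential supremum of $\text{e}^{-\P f(r,\cdot)}$, and $\Vert D\mathcal{K}(h_\epsilon)-D\mathcal{K}(0)\Vert\ge(1-\text{e}^{-2c})(|r|+1)^{-2}$ for all small $\epsilon$. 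Dominated convergence only gives continuity for each fixed $h$, not in operator norm.

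Your argument does yield one of two modified results:
\begin{itemize}
\item[(i)] For $r\notin\overline{\Omega}$: G\^ateaux and even Hadamard differentiability with the stated derivative. This is your second remedy, and it works because $h_n\to h$ in $L^1(\Omega)$ implies $\P h_n(r,\cdot)\to\P h(r,\cdot)$ in $L^1(\S)$, and convergent sequences are uniformly integrable.
\item[(ii)] For every $r$: Fr\'echet and continuous differentiability after replacing $L^1(\Omega)$ by $L^\infty(\Omega)$. Then $|\P h(r,\vartheta)|\le2\Vert h\Vert_\infty$, so the remainder is at most $8\pi\text{e}^{2\Vert h\Vert_\infty}\Vert h\Vert_\infty^2$. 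Also $|\text{e}^{-a_1}-\text{e}^{-a_2}|\le|a_1-a_2|$ for $a_1,a_2\ge0$ gives $\Vert D\mathcal{K}(f_1)-D\mathcal{K}(f_2)\Vert\le16\pi\Vert f_1-f_2\Vert_\infty$.
\end{itemize}
Whichever you choose, state the modified lemma explicitly. The Fr\'echet-$C^1$ claim on $L^1$ is false, so neither your argument nor the paper's estimate can be completed as written.
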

\begin{proof}
Define $\Delta r:= \mathcal{K}\left(f+h\right)(r)- \mathcal{K}f(r)-D \mathcal{K}(h)$, i.e.
\begin{align*}
\Delta r  & =\int_{\S}\text{e}^{-\P f\left(r,\vartheta\right)}\left[\text{e}^{-\P h\left(r,\vartheta\right)}-1+\P h\left(r,\vartheta\right)\right]\text{d} \vartheta\,.
\end{align*}
Because of
\[
\left\Vert \text{e}^{-\P h}-1+\P h\right\Vert \leq \frac{1}{2}\left\Vert \P\right\Vert ^{2}\left\Vert h\right\Vert ^{2}+\mathcal{O}\left(\left\Vert h\right\Vert ^{3}\right)\,,
\]
for $h$ tending towards zero, we obtain
\begin{align*}
& \norm[\infty]{\Delta r } \\ & \leq \int_{\S}\left\Vert  \text{e}^{-\P f\left(r,\vartheta\right)}\right\Vert_{\infty}\left\Vert \text{e}^{-\P h\left(r,\vartheta\right)}-1+\P h\left(r,\vartheta\right)\right\Vert_1 \d\vartheta \\
&  \leq\text{e}^{ \norm[1]{\P f}}
\left(\frac{1}{2}\left\Vert \P\right\Vert ^{2}\left\Vert h\right\Vert_{1}^{2}+\mathcal{O}\left(\left\Vert h\right\Vert _{1}^{3}\right)\right)    \underbrace{\int_{\S}\d\vartheta}_{=4\pi}\,
\end{align*}
 and thus
\begin{align*}
& \lim_{\norm[1]{h}\rightarrow0} \frac{\text{\ensuremath{\norm{\Delta r}} }}{\norm[1]h} \\
& \leq \lim_{\norm[1]{h}\rightarrow0} 4\pi \text{e}^{\norm[1]{\P f}}
\left(\frac{1}{2}\left\Vert \P\right\Vert ^{2}\left\Vert h\right\Vert _{1}+\mathcal{O}\left(\left\Vert h\right\Vert _{1}^{2}\right)\right) \\
&  =0\,.
\end{align*}
Here, we used the estimate
\begin{align*}
\left\Vert \text{e}^{-\P f}\right\Vert_{\infty} \leq \text{e}^{\left\Vert \P f\right\Vert_{1}}
\end{align*}
which holds since $f$ is a positive mapping (Definition~\ref{def:object}), so $\P f \geq 0$. This implies $\left\Vert \text{e}^{-\P f}\right\Vert_{\infty} \leq \text{e}^{-0} = 1$. On the other hand, $\text{e}^{\left\Vert \P f\right\Vert_{1}} \geq \text{e}^{0} = 1$, which establishes the inequality.
\end{proof}
\section{Object Reconstruction from Single Pixel X-Ray Transform Data}
\label{sec:reconstruction}
We expect sharp edges between the different material layers because of the theoretical model for a nuclear weapon.
It is well known that with the TV regularization the object's sharp edges can be maintained \cite{burger2013guide}.
For the reconstruction of the object from a set of noisy
Single Pixel X-Ray Transform values, we therefore need to solve the minimization problem
\begin{align}
  \label{eq:minimization_problem_3d}
  \min_{f \in W^{1,1}(\Omega)}
  \dfrac{1}{2} \sum_{r \in R} \big\lVert \mathcal{K} f(r) - \ndat(r) \big\rVert_{2}^{2}
  + \alpha \left\lVert \nabla f \right\rVert_{1}.
\end{align}
Here $R \subset \mathbb{R}^{3}$ denotes the set of measurement points.  The
scalar variable $\alpha$ is the regularization parameter, which indicates how
strong the TV regularization, i.e., the punishment of the edges, should be
included in the reconstruction.
The noisy Single Pixel X-Ray Transform measurement values are denoted as $\ndat$.
The variable $f$ denotes the object that we want to reconstruct.
Whereby applying the Single Pixel X-Ray Transform on a one-dimensional vector should be understood as
applying the Single Pixel X-Ray Transform on the corresponding three-dimensional rotational symmetric
object.
We define
\begin{align}
  F(f) \coloneqq \dfrac{1}{2} \sum_{r \in R} \big\lVert \mathcal{K} f(r) - \ndat(r) \big\rVert_{2}^{2}
  \hspace{15mm}
\end{align}
and
\begin{align}
  G(f) \coloneqq \alpha \left\lVert \nabla f \right\rVert_{1}.
  \label{eq:minimization_problem_1d}
\end{align}
For the minimization of the functional
\begin{align}
  J&: \Leins \rightarrow \mathbb{R}_{0}^{+}, \nonumber\\
  J(f) &= F(f) + G(f),
\end{align}
the Douglas-Rachford splitting provides an iterative way of finding the minimum
if $F$ and $G$ are maximal monotone operators.
It has been shown that the convergence also follows under the assumption that
$F$ and $G$ are proper, lower semi-continuous and convex \cite{bauschke2011convex, combettes2011proximal}.

However, we will only solve a discretized version of the problem using this splitting approach, so we first turn to the discretization of this problem in the next section.

\subsection*{Discretization of the Single Pixel X-Ray Transform}
\label{sec:k-transform_discrete}
For the implementation of the fan-beam Single Pixel X-Ray Transform measurement value $\mathcal{K}f(r)$,
we will not integrate over all angles $\S_{D}$ but only a finite subset of these angles $S^{2}_{D}$ (see Figure \ref{fig:ktra_schem}).
So we consider a discretized version of the Single Pixel X-Ray Transform $\kdis_{D}$.
\begin{definition}                                                                                      Let $ \Psi_m:L^{\infty}\left(\R^{3},\left[0,1\right]\right)\rightarrow\left[0,1\right]^{m}
$ be the observation operator mapping to the measurements $g_j=\mathcal{K} f(r_j)$ of  a finite subset of sources $r_j \in S_{D,m}\subset \S_D$, $j=1,\dots,m$. The  points are chosen in a way such that the corresponding intersection points of the line with the sphere are uniformly distributed with a fill distance $h_{X_{i} ,D}$. The discrete  Single Pixel X-Ray Transform of the object $f$ is then  defined as
\begin{align}
  \kdis_{S_{D,m}} f(r) \coloneqq  \Psi_m \circ \mathcal{K}f(r) =\int_{S_{D,m}}\text{e}^{- \mathcal{P}f(r, \vartheta)} \del \vartheta.
\end{align}
\end{definition}
\noindent
Let
\begin{align*}
  \texttt{S}_{D}(r)_{1} &\subset \dots \subset \texttt{S}_{D}(r)_{i} \subset \texttt{S}_{D}(r)_{i+1} \subset  \dots  \\ &\subset S_{D, m} \subset \S_{D}(r)
\end{align*}
be a nested sequence of the set of rays going through the point $r$.
If the fill distance $h_{X_{i} ,D}$ tends of $ \texttt{S}_{D}(r)_{i}$
tends towards zero on the section of $\S_{D}(r)$,
the value of the discretized Single Pixel X-Ray Transform approaches the value of the continuous Single Pixel X-Ray Transform
\begin{align}
  \lim\limits_{i \rightarrow \infty} \kdis_{\texttt{S}_{D}(r)_{i}}f(r) = \mathcal{K}_{D}f(r).
\end{align}
We implemented the discrete Single Pixel X-Ray Transform and distributed the rays so that the
fill distance falls off very quickly.

The finite set of measurement points is denoted by $\mathtt{R} \subset \mathbb{R}^{3}$.
Further, we will reconstruct a discretized version of the object $f$. We assume that the object can be represented in a given basis ${\varphi_i}$, $i=1,\dots,M$  of a finite-dimensional subspace $X_M\subset \Leins$. In our application, the indicator functions of the voxels are a suitable choice. The object is thus given as
\begin{align}
f=\sum_{i=1}^{M}f_{i}\varphi_{i}\,.
\end{align}
Since we assume the object $f$ is rotational symmetric, we conclude that for the discrete
object $\odis$ voxels that have the same distance to the
center take the same value.
Let $\zeta$ be the function that maps every voxel to the distance to the objects center
\begin{align}
  \zeta : L^{1}(\Omega, \mathbb{R}^{+}_{0}) \rightarrow \mathbb{R}_{0}^{+}.
\end{align}
The set of all the voxel indices with different distances to the center is then given with
\begin{align}
  I_{d} = \{i \ \vert \ i& \in 1, \dots M, \ \ \zeta (\varphi_i) \neq \zeta (\varphi_j), \nonumber \\ j& \in 1, \dots, i-1 \}.
\end{align}
From this the one-dimensional version of the problem is given by
\begin{align}
  f=\sum_{i \in I_{d}} f_{i}\varphi_{i}\,.
\end{align}
and we denote the discrete vector by $\texttt{f}=(f_i)_{i\in I_d} $.
Since the Single Pixel X-Ray Transform is convex and lower semi-continuous, this also holds for the function $F$, which is also proper.
Also, the function $G$ fulfills these properties.
With the notion of \eqref{eq:minimization_problem_1d}
the \textit{Douglas-Rachford splitting algorithm} here takes the form
\begin{align}
  \texttt{x}^{k+1} &= \text{prox}_{\gamma \texttt{G}}(\texttt{y}^{k}) = \nonumber \\
  & \argmin_{\texttt{x} \in \mathbb{R}^{3}}
  \Bigg\{
    \dfrac{1}{2} \left\lVert \texttt{y}^{k} - \texttt{x} \right\rVert_{2}^{2} + \gamma \dfrac{\alpha}{N} \sum_{i=1}^{N-1} \big\lvert \texttt{x}_{i+1}-\texttt{x}_{i} \big\rvert \Bigg\},
    \label{eq:dr_xstep}
    \\
  \texttt{f}^{k+1} &= \text{prox}_{\gamma \texttt{F}}(2 \texttt{x}^{k+1} - \texttt{y}^{k}) \nonumber \\
  &= \argmin_{\texttt{f} \in \Omega}
  \Bigg\{
    \dfrac{\gamma}{2} \sum_{r \in R}\big\lVert \kdis_{\texttt{S}_{D}(r)} \texttt{f}(r) - \ndatdis(r) \big\rVert_{2}^{2} \nonumber \\
  & \hspace{2cm} + \dfrac{1}{2} \big\lVert 2 \texttt{x}^{k+1} - \texttt{y}^{k} - \texttt{f}^{} \big\rVert_{2}^{2}
  \Bigg\},
    \label{eq:dr_ystep}
  \\
  \texttt{y}^{k+1} &= \texttt{y}^{k} + \texttt{f}^{k+1} - \texttt{x}^{k}.
\end{align}

Here, $\gamma$ defines the step size of the splitting algorithm.
To determine $\odis^{k+1}$, in Equation \eqref{eq:dr_ystep}, we use the
\textit{Limited-memory Broyden-Fletcher-Goldfarb-Shanno algorithm
with bounds} \cite{byrd1995limited, zhu1997algorithm}.
This algorithm is a quasi-Newton method that is often used for solving non-linear optimization
problems.
Further, it ensures that the boundary conditions ($\odis \in [0,1]^{N}$) are met.
Calculation of the $\odis^{k+1}$ vector is responsible for most of the computing time.
To determine $x^{k+1}$ in Equation \eqref{eq:dr_xstep}, we use the direct
algorithm for TV regularization by Lauren Condat introduced in
\cite{condat2013direct}.

All this was implemented in a Python package called \texttt{ktra}.
The three main pillars of the \texttt{ktra} package are the implementation of the
Single Pixel X-Ray Transform, the implementation of the
\textit{Direct Algorithm for 1-D Total Variation Denoising} \cite{condat2013direct} tailored to
this reconstruction problem,
and a \textit{Douglas-Rachford splitting algorithm}
for reconstructing a TV regularized, discrete model from several noisy Single Pixel X-Ray Transform measurement values.
With this package, any number of Single Pixel X-Ray Transform values of any object can easily be determined
under different rotations and translations.
Further, it includes the possibility to reconstruct an object from a specific set of noisy
Single Pixel X-Ray Transform measurement values.
In particular, this reconstruction is possible for rotationally symmetric objects.

The code is mainly based on the packages \textit{NumPy} \cite{numpy},
which allows vectorization,
and \textit{SciPy} \cite{scipy}, which provides an implementation of
the \textit{Limited-memory Broyden-Fletcher-Goldfarb-Shanno algorithm} that we use.

\section{Numerical results}
\label{sec:results}
In this section, we consider the reconstruction of three rotationally symmetric
objects from Single Pixel X-Ray Transform measurement values. The first object is a ball with
constant density, the second and third are spheres consisting of two and three shells of different densities, respectively. For the
particular case of a rotationally symmetric object – like the three examples
considered here – with a known center, the three-dimensional problem can be
reduced to a one-dimensional problem. Voxels that have the same distance to the
center of the object take the same value.  Since there is no experimental
Single Pixel X-Ray Transform measurement data available, we generate this data synthetically in the first
step: We calculate the Single Pixel X-Ray Transform measurement value from different source
points and provide these values with a noise level. As an approximation to
Poisson noise at high count rates, we use Gaussian noise here. From these measurement
results, we then reconstruct the object by the just-described procedure. In Figure
\ref{fig:results} we show a cross-section of the various reconstructions. It
should be noted that the reconstruction was performed by optimizing the
reconstruction of the three-dimensional object, not only the cross-section. In
Table \ref{table:reconstruction_parameter} we list the exact parameters. Some
of the inexactness can probably be attributed to the discretization error.

The test objects are motivated by simplified models of nuclear weapons as described by Fetter et al.\ \cite{fetter1990}: the concentric shells of different densities correspond to the layered structure of a nuclear warhead (plutonium/uranium core, beryllium reflector, tungsten tamper, high explosive, and aluminium casing, see Figure~\ref{fig:fetter}). The density values used here are normalized and do not represent actual absorption coefficients of these materials, but the shell geometry captures the essential features relevant for verification. The single sphere (constant density) serves as the simplest baseline, while the two- and three-shelled spheres model increasingly realistic warhead geometries.

\begin{figure*}[ht]
  \centering
    \includegraphics[]{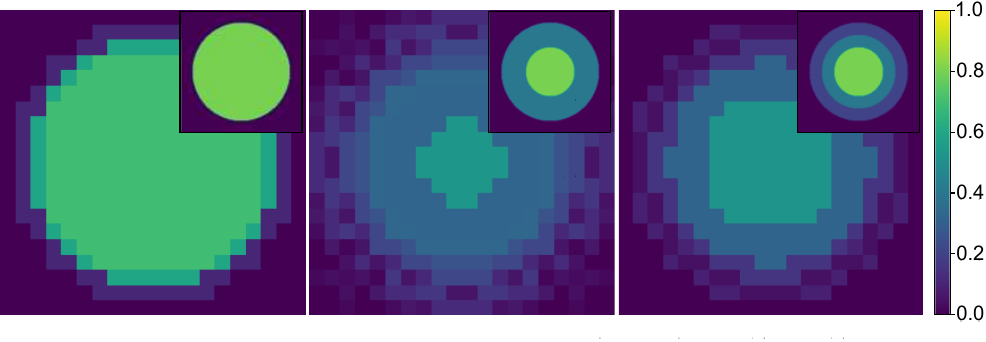}
  \caption{Results of the reconstruction of objects from Single Pixel X-Ray Transform measurement values.
  The left side always shows a cross-section through the corresponding object,
  while the right side shows the related discretized reconstruction.}
  \label{fig:results}
\end{figure*}
\begin{table*}[ht]
\centering
\begin{tabular}{r r r r}
\toprule
                                    & Sphere      & Two-shelled sphere & Three-shelled sphere \\
\midrule
Radii of Shells   & 0.8        & 0.4; 0.8 & 0.4; 0.6; 0.8          \\
Densities of the Shells        & 0.8        & 0.8; 0.4      & 0.8; 0.4; 0.2          \\
Discretization & $20 \times 20 \times 20$ & $20 \times 20 \times 20$ & $20 \times 20 \times 20$\\
  Number measurements & 1030 & 1030 & 1030 \\
Noise                               & 1\,\%        & 1\,\%               & 1\,\%          \\
Regularization Parameter ($\alpha$) &    0.1      & 0.03               & 0.05           \\
Step size ($\gamma$)                &    1        & 1                  & 1           \\
Iterations                          & 8700      & 5000               & 5000          \\
\bottomrule
\end{tabular}
    \caption{Parameters used for the reconstruction of the three test objects.}
    \label{table:reconstruction_parameter}
\end{table*}
We reconstructed the second object with the shell structure with different
noise levels and different regularization parameters in a second step. All
other parameters were chosen as in the previous reconstruction. In addition, we
created a three-dimensional discrete object of our model by dividing the object
into $20 \times 20 \times 20$ voxels, where each voxel takes the value that the function
takes at its center. We then computed the Structural Similarity Index Measure (SSIM) between the discretized
model and our reconstruction for each result. The SSIM is a perceptual metric that quantifies the similarity between two images based on luminance, contrast, and structure, yielding values in $[0,1]$ where $1$ indicates perfect agreement. The results can be
seen in Figure \ref{fig:results2}.
\begin{figure*}[!ht]
  \centering
  \includegraphics[]{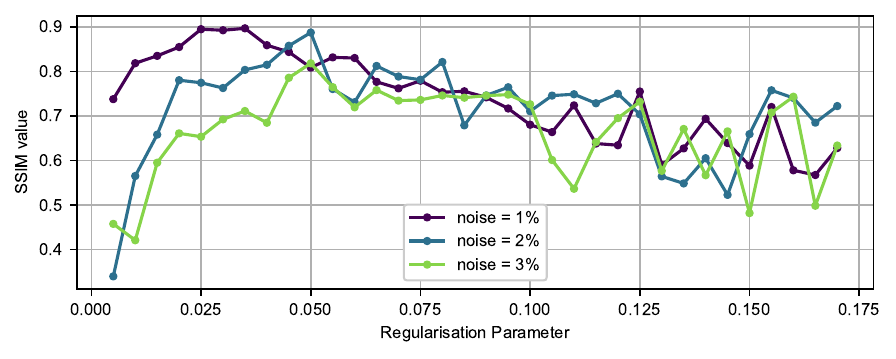}
  \caption{Analysing the dependency of the SSIM value on the noise and the regularization parameter}
  \label{fig:results2}
\end{figure*}
\section{Discussion and Outlook}
\label{sec:discussion_and_outlook}
In this publication, a generalization of the Single Pixel X-Ray Transform introduced in
\cite{kemp2016physical} was developed.
Essential properties of the Single Pixel X-Ray Transform were investigated.
We introduced a method and its implementation for solving the inverse problem of the Single Pixel X-Ray Transform under TV regularization.
We show that it is possible to reconstruct rotationally symmetric objects
with known centers from noisy Single Pixel X-Ray Transform measurements under certain limitations.

One of the great difficulties in the reconstruction process is the
discretization error.
Discretized objects can be reconstructed entirely and without further
difficulties from their noisy Single Pixel X-Ray Transform measurement values. Objects
discretized in voxel are not rotationally symmetric; in this case, the
dimension reduction step is omitted.
For a lower level of discretization, the Single Pixel X-Ray Transform values of the continuous
object and a discretized version of this object differ very much from each
other.
The higher the level of discretization, the smaller this difference becomes.
Since the reconstruction process is very computationally demanding, we can only
consider small levels of discretization in the
reconstruction. Another difficulty is that very fine reconstructions
require a tremendous number of measured values.
The reduction of the discretization error could be
reached by different methods, either a significant acceleration of
the entire reconstruction process or a statistical analysis of the error.
Another possibility is the choice of another discretization:
instead of discretizing in cube-shaped voxels, a discretization
of the rotationally symmetric object in shells could lessen the discretization error.
In order to counteract the typical effect of contrast reduction in TV regularization, two-stage reconstruction methods could be used in the future \cite{brinkmann2017bias}.

A practical rule for determining the regularization parameter $\alpha$ and the step
size $\gamma$ of the \textit{Douglas-Rachford Splitting Algorithm}
remains to be determined.

We assumed that the center of the object corresponds to the center of the black
box --- it would be of interest to allow a
displacement of the center of the object in the black box. This could be done
by introducing the center of the object as three new variables.

The Definition \ref{def:object} of an object holds only for the mono-energetic case of gamma rays. This
definition of an object could be extended by defining the object as
\begin{align}
  f: \mathbb{R}^{3} \times \mathbb{R}^{+} \rightarrow \mathbb{R}_{0}^{+},
\end{align}
which maps every energy of an incoming gamma ray at any point to an absorption
coefficient.  This generalization could model the calculation with a continuous
gamma spectrum and the nuclear resonance fluorescence absorption.

\subsection*{Code Availability}
The implementation is published under the GNU General Public License 3 \cite{gplv3} as a Python package called \texttt{ktra}, available at \url{https://github.com/cfichtlscherer/ktra} \cite{ktra}.

\end{document}